\documentclass[11pt]{amsart}
\usepackage[a4paper,margin=25mm]{geometry}
\usepackage{amsmath,amssymb,amsthm}
\usepackage[T1]{fontenc}
\usepackage{lmodern,microtype,cite,hyperref}
\newtheorem{theorem}{Theorem}
\newtheorem{proposition}[theorem]{Proposition}
\newtheorem{lemma}[theorem]{Lemma}
\theoremstyle{remark}
\newtheorem{remark}[theorem]{Remark}
\newcommand{\F}{\mathbb F}
\newcommand{\Z}{\mathbb Z}
\newcommand{\C}{\mathbb C}
\newcommand{\one}{\mathbf 1}
\newcommand{\vp}{v_p}
\DeclareMathOperator{\Ann}{Ann}
\DeclareMathOperator{\GR}{GR}
\title{Subset sums of power values over Galois rings}
\author[Zijie Gu]{Zijie Gu}

\makeatletter
\g@addto@macro\@setauthors{%
\begin{center}
\small
Independent Researcher, Shanghai China\\{}
\href{mailto:hilightgu@gmail.com}{hilightgu@gmail.com}\\{}
\end{center}
}
\makeatother
\keywords{Galois rings, subset sums, power residues, additive characters,
equidistribution}
\date{}
\begin{document}
\begin{abstract}
Let $R=\GR(p^m,p^{mf})$, where $p$ is prime, $q=p^f$, $m,f\geq1$, $d\geq2$ and $p\nmid d$. We count $n$-element subsets of $P=\{x^d:x\in R\}\setminus\{0\}$, the set of distinct nonzero $d$th-power values, with a prescribed sum. For fixed $m,d,n$ with $n\geq3$, the count equals
\[
      q^{-m}\binom{|P|}{n}(1+O_{m,d,n}(q^{1-n/2}))
      \]
as $q\to\infty$ through admissible prime powers, uniformly over all targets and with both $p$ and $f$ allowed to vary. For fixed $m,d$, we also classify uniform relative equidistribution at the remaining positive sizes: it fails at $n=1$, and at $n=2$ it holds along an admissible sequence if and only if, for all sufficiently large terms, $\gcd(d,q-1)=1$ and $(m\geq2\text{ or }p\ne2)$.
\end{abstract}
\maketitle

\section{Introduction}\label{sec:introduction}

Let $R=\GR(p^m,p^{mf})$, where $p$ is prime, $q=p^f$, $m,f\geq1$, $d\geq2$ and $p\nmid d$. We use $R/pR\cong\F_q$ and call $m$ the length of $R$. Put $P=\{x^d:x\in R\}\setminus\{0\}$ and $N=|P|$. For $b\in R$ and an integer $n\geq0$, put
\[
 N_P(n,b)=\#\left\{S\subseteq P:|S|=n,\quad\sum_{x\in S}x=b\right\}.
\]
Distinctness is imposed on the power values themselves, including any nonzero nonunit power values.

Write $g=\gcd(d,q-1)$ for the index of $(\F_q^\times)^d$ in $\F_q^\times$. A prime power $q=p^f$ is admissible when $p\nmid d$. For fixed $m,d$ and positive $n$, uniform relative equidistribution along an admissible sequence $q\to\infty$ means that
\[
  \frac{N_P(n,b)}{q^{-m}\binom Nn} \to 1
\]
uniformly over $b\in R$. Relative normalisation is restricted to $1\leq n\leq N$, which holds eventually for fixed positive $n$.

The finite-field antecedents include Li and Wan's exact count over nonzero field elements \cite[Theorem~1.2]{LW08} and Zhu and Wan's subgroup estimates \cite[Theorems~4.1 and~4.4]{ZW12}, whose cycle weights give the corresponding fixed-size field rates, including characteristic two. Exact quadratic-residue counts in odd characteristic appear in Weiqiong Wang, Li-Ping Wang and Haiyan Zhou \cite[Theorems~5.1 and~5.2]{WWZ17}. Weiqiong Wang and Jennifer Nguyen \cite[Theorem~3.1]{WN18} and Hyok Choe and Chunghyok Choe \cite[Theorem~3.1]{CC19} give small-size all-target subgroup positivity criteria in odd and binary characteristic, respectively, under explicit field-size and index hypotheses.

At length two in odd characteristic, Ding and Zhou's results \cite[Theorem~6.1]{DZ} and \cite[Theorem~5.11 and Corollary~5.12]{DZII} cover the same distinct nonzero power-value set, with main term $q^{-2}\binom Nn$. Their combined exact formulas give the same coefficient majorant and all-target fixed-size upper rates $O_{d,n}(q^{1-n/2})$ for $g>1$ and $O_n(q^{1-n})$ for $g=1$, including $p\mid n$. In particular, the bound for \(g=1\) implies uniform relative equidistribution when \(n=2\). These are extracted consequences of exact formulas, distinct from printed compressed estimates or separately stated rate theorems. Appendix~\ref{app:length-two-comparison} gives the comparison and the qualified target scope of the retained 2019 derivation.

The counting background comprises the weighted distinct-coordinate sieve and cycle-index formulas \cite[Theorem~2.6, Proposition~2.8 and Lemma~4.1]{LW10}, and the character-product method \cite[Lemma~2.5]{Kosters}. Our signed character-and-cycle identity specialises Li and Wan's general finite-ring formula \cite[Lemma~3.1]{LW18}.

\medskip\noindent\textbf{Main results.}
For fixed $m,d,n$ with $n\geq3$, Theorem~\ref{thm:rates} gives
\[
 N_P(n,b)=q^{-m}\binom Nn
 \left(1+O_{m,d,n}(q^{1-n/2})\right),
\]
uniformly over every $b\in R$, as $q\to\infty$ along any admissible sequence. The prime $p$, extension degree $f$ and pointwise index $g$ may all vary. For fixed $m,d$ and a fixed positive size $n$, the complete classification of uniform relative equidistribution over every target along any admissible sequence $q_j=p_j^{f_j}\to\infty$ is given by Theorem~\ref{thm:classification}:
\begin{itemize}
\item At $n=1$, uniform relative equidistribution always fails.
\item At every $n\geq3$, it always holds.
\item At $n=2$, it holds if and only if, eventually, both
\[
 g_j=\gcd(d,q_j-1)=1,\qquad
 (m\geq2\text{ or }p_j\ne2).
\]
The successful pair cases have relative error $O_{m,d}(q_j^{-1})$.
\end{itemize}

Theorem~\ref{thm:rates} also provides explicit finite bounds, sharper estimates when $g=1$, and sufficient conditions ensuring positivity for every target. Proposition~\ref{prop:exponent} determines the order of an explicit nonnegative majorant for the relative error, while Proposition~\ref{prop:obstructions} establishes the necessity of the conditions in the classification through obstructions arising from the actual subset-sum counts.

Relative to the length-two results, the present scope includes arbitrary fixed length and admissible binary rings, with all permitted nonzero power values. Their possible valuations are $0,d,2d,\ldots<m$. Nonunit layers occur when $m>d$.

We resolve the power-value character sums by conductor, the least $r\in\{0,\ldots,m\}$ for which an additive character depends only on the residue class modulo $p^rR$. The resulting bounds lead to the finite counting inequality, from which we obtain the uniform error estimates by estimating the cycle coefficients. We then complete the classification using exact counts for $n=1$ and $n=2$. Appendix~\ref{app:fieldexact} recalls Li and Wan's exact field count and derives its relative-error normalisation and fixed-size expansion.

\section{Notation and conventions}\label{sec:notation}

Throughout the paper, $p$ is a prime. Let
\[
 R=\GR(p^m,p^{mf}),\qquad q=p^f,\qquad
 m,f\in\Z_{\geq1},\qquad d\in\Z_{\geq2},\qquad p\nmid d.
\]
Fix the residue-field identification $R/pR=\F_q$. For $1\leq s\leq m$, put
\[
 R_s=R/p^sR,\quad
 H_s=\{u^d:u\in R_s^\times\},\quad
 H=H_1=(\F_q^\times)^d,\quad g=\gcd(d,q-1).
\]
Write
\[
 P=\{x^d:x\in R\}\setminus\{0\},\qquad N=|P|,\qquad
 J=\left\lfloor\frac{m-1}{d}\right\rfloor.
\]
Write $\widehat R$ for the additive character group of $R$. For $\psi\in\widehat R$, define its conductor by
\[
 c(\psi)=\min\{r\in\{0,\ldots,m\}:\psi|_{p^rR}=1\}.
\]
The trivial additive character is denoted by $1$, and $c(\psi)=0$ precisely when $\psi=1$. Sums over empty index sets are zero. We write $\one_E$ for the indicator of a condition $E$ and $\vp(i)$ for the $p$-adic valuation of a positive integer $i$.

\section{Power values and exact character sums}

The Fourier count requires power-value character sums resolved by conductor. We first decompose the value set by valuation, then calculate the layer sums and the effect of integer multiplication.

The complete fibres of $H_s$ over $H$ determine the layer sums. For $0\leq t<m$, the map $\mu_t:R_{m-t}\to p^tR$ is given by multiplication by $p^t$ and sends units to elements of valuation exactly $t$. The following proposition also accounts for the changes in conductor and induced field character that can arise from integer multiplication.

\begin{proposition}[Power values and exact character sums]\label{prop:power-exact}
For $0\leq t<m$, the map
\begin{equation}\label{eq:mu}
 \mu_t:R_{m-t}\longrightarrow p^tR,\qquad
 \mu_t(x+p^{m-t}R)=p^tx
\end{equation}
is a well-defined isomorphism of \(R\)-modules. Interpret $p^tH_{m-t}$ as the image $\mu_t(H_{m-t})$. Then
\begin{equation}\label{eq:decomp}
 P=\bigsqcup_{j=0}^{J}\mu_{jd}(H_{m-jd}),\qquad
 N:=|P|=\frac{q-1}{g}\sum_{j=0}^{J}q^{m-jd-1}.
\end{equation}

If $c(\psi)=r>0$, define
\begin{equation}\label{eq:lambda}
  \lambda_\psi(h)=\psi(p^{r-1}\widetilde h)
  \qquad(h\in\F_q),
\end{equation}
where $\widetilde h\in R$ is any lift of $h$. This is independent of the lift and is a nontrivial additive character of $\F_q$.
Put
\[
  \eta_H(\lambda)=\sum_{h\in H}\lambda(h).
\]
The exact layer sums are
\[
  \sum_{y\in\mu_{jd}(H_{m-jd})}\psi(y)=
  \begin{cases}
    |H|q^{m-jd-1},&r\leq jd,\\
    q^{m-r}\eta_H(\lambda_\psi),&r=jd+1,\\
    0,&r\geq jd+2,
  \end{cases}
  \qquad (0\leq j\leq J,\ 1\leq r\leq m).
\]

To express the sum over all valuation layers, put
\[
  A_r=\frac{q-1}{g}
      \sum_{j=\lceil r/d\rceil}^{J}q^{m-jd-1}
      \qquad(1\leq r\leq m).
\]
Then, for $S(\psi)=\sum_{y\in P}\psi(y)$, we have
\begin{equation}\label{eq:S}
  S(\psi)=
  \begin{cases}
    N,&r=0,\\[2pt]
    A_r+\one_{d\mid(r-1)}q^{m-r}
    \eta_H(\lambda_\psi),&1\leq r\leq m.
  \end{cases}
\end{equation}

Let $\psi$ have conductor $c$, and let $i\geq1$ be an integer. Write $i=p^ea$, with $e=\vp(i)$ and $p\nmid a$, and let $\bar a\in\F_p^\times\subseteq\F_q^\times$ be its residue. For $\psi_i(x)=\psi(ix)$, one has
\begin{equation}\label{eq:ci}
 c_i:=c(\psi_i)=\max\{c-e,0\}.
\end{equation}
If $c_i>0$, then
\begin{equation}\label{eq:scale}
 \lambda_{\psi_i}(h)=\lambda_\psi(\bar a h)
 \quad(h\in\F_q).
\end{equation}
Consequently the complete twisted sum is
\begin{equation}\label{eq:Si}
 S_i(\psi):=\sum_{y\in P}\psi(iy)=
 \begin{cases}
 N,&c_i=0,\\[2pt]
 \displaystyle\frac{q-1}{g}
 \sum_{j=\lceil c_i/d\rceil}^{J}q^{m-jd-1}
 +\one_{d\mid(c_i-1)}q^{m-c_i}
 \sum_{h\in H}\lambda_\psi(\bar a h),&1\leq c_i\leq m.
 \end{cases}
\end{equation}
In the second branch $c>0$, so $\lambda_\psi$ is defined.

The number of additive characters of each conductor is
\begin{equation}\label{eq:counts}
 \#\{\psi\in\widehat R:c(\psi)=r\}=
 \begin{cases}
 1,&r=0,\\
 q^r-q^{r-1},&1\leq r\leq m.
 \end{cases}
\end{equation}
\end{proposition}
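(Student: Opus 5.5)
The plan is to prove the parts of Proposition~\ref{prop:power-exact} in the order stated, using only the structure of $R$ as a local ring with maximal ideal $pR$, residue field $\F_q$ and $|p^tR|=q^{m-t}$.

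\emph{The isomorphism and the decomposition.} Multiplication by $p^t$ is an $R$-linear surjection $R\to p^tR$. Since every element of $R$ is $p^ku$ with $u$ a unit, its kernel is $p^{m-t}R$. Hence $\mu_t$ is well defined and bijective, and a unit $x$ goes to an element of valuation exactly $t$. For the decomposition, write a nonzero $x=p^ku$ with $u\in R^\times$ and $k<m$. Then $x^d=p^{kd}u^d$, which is nonzero iff $kd<m$, i.e.\ $k\le J$. It equals $\mu_{kd}(u^d \bmod p^{m-kd})$, and the reduction $R^\times\to R_{m-kd}^\times$ is surjective, so $P=\bigcup_j\mu_{jd}(H_{m-jd})$. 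The pieces are disjoint by valuation, and each $\mu_{jd}$ restricted to $H_{m-jd}$ is injective.

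For $|H_s|$: since $p\nmid d$, the kernel $1+pR_s$ of $R_s^\times\to\F_q^\times$ is a $p$-group, so $d$th powering is bijective on it. The exact sequence $1\to 1+pR_s\to R_s^\times\to\F_q^\times\to1$ then shows that $H_s$ is the full preimage of $H$. Thus $|H_s|=q^{s-1}(q-1)/g$, and $N$ follows. This ``complete fibre'' property is also what drives the layer sums.

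\emph{Layer sums and $\lambda_\psi$.} $\lambda_\psi$ is independent of the lift because $\psi$ kills $p^{r-1}\cdot pR=p^rR$. It is additive, and it is nontrivial since otherwise $\psi$ would kill $p^{r-1}R$, contradicting minimality of $r$. For the sum over $\mu_{jd}(H_{m-jd})=p^{jd}\widetilde{H_{m-jd}}$, lift $H_{m-jd}$ to the full preimage of $H$ in $R^\times$, which has each fibre repeated $q^{jd}$ times, and normalise. Then write each element as $u=u_0(1+pz)$, summing over all $z$ mod $p^{m-1}$ times coset representatives $u_0$ of $H$. This gives
\[
\sum_{h}\sum_{z\in R/p^{m-jd-1}}\psi(p^{jd}\tilde h+p^{jd+1}\tilde h z).
\]
If $r\le jd$, every term is $1$. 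If $r\ge jd+2$, the inner sum is over a character of $p^{jd+1}R$ that is nontrivial there, so it vanishes. If $r=jd+1$, the inner sum is trivially $q^{m-jd-1}=q^{m-r}$, and the outer term is $\lambda_\psi(h)$. Summing over $j$ gives \eqref{eq:S}: the layers with $jd\ge r$ contribute $A_r$, and the single possible layer with $jd=r-1$ requires $d\mid r-1$ and $j\le J$. The latter is automatic, since $r-1<m$.

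\emph{Scaling and counts.} For $i=p^ea$ with $a$ a unit in $R$, $\psi_i$ kills $p^kR$ iff $\psi$ kills $p^{k+e}R$, giving $c_i=\max\{c-e,0\}$. If $c_i>0$, then $\lambda_{\psi_i}(h)=\psi(p^ea\,p^{c_i-1}\tilde h)=\psi(p^{c-1}a\tilde h)=\lambda_\psi(\bar a h)$, and \eqref{eq:Si} is \eqref{eq:S} applied to $\psi_i$. Finally, characters with $c(\psi)\le r$ are those trivial on $p^rR$, i.e.\ characters of $R/p^rR$, of which there are $q^r$. Differencing gives \eqref{eq:counts}.

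The main obstacle I expect is the $r=jd+1$ layer computation: getting the normalisation between $H_{m-jd}$ and its lifts right and checking that the unipotent fibre sum gives exactly $q^{m-r}$. The fibre statement $H_s=\pi^{-1}(H)$ must be established carefully, including for $p=2$. It holds there because $d$ is odd, so powering is still bijective on the $2$-group $1+pR_s$.
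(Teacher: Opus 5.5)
Your proposal is correct and follows essentially the same route as the paper. Both arguments use the full-fibre property of $H_s$ (from bijectivity of $d$th powering on $1+pR_s$), the valuation decomposition, a three-case layer sum, the conductor shift under $i=p^ea$, and the count of characters of $R/p^rR$. The only cosmetic difference is that the paper writes each layer directly as $\bigsqcup_{h\in H}(p^{jd}\widetilde h+p^{jd+1}R)$. That gives your double sum at once and avoids the lifting to $R^\times$ and the $q^{jd}$ normalisation you flagged as the delicate step.
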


\begin{proof}
Since $R$ is a free module of rank $f$ over $\Z/p^m\Z$, we have
\begin{equation}\label{eq:annihilator}
 \Ann_R(p^t)=p^{m-t}R\quad(0\leq t\leq m).
\end{equation}
The annihilator identity makes \eqref{eq:mu} well-defined and injective. Surjectivity and $R$-linearity follow from multiplication by $p^t$. At $t=0$ this is the canonical identification $R_m=R/p^mR=R$.

We also have $|R_s|=q^s$. The kernel of $R_s^\times\to\F_q^\times$ is the finite abelian $p$-group $1+pR_s$, of order $q^{s-1}$. Since $p\nmid d$, the $d$th-power map is an automorphism of this kernel.

If $v\in R_s^\times$ has residue in $H$, choose $w\in R_s^\times$ whose residue is a $d$th root of the residue of $v$. Then $vw^{-d}\in1+pR_s$ is a $d$th power, so $v$ is also a $d$th power. The converse follows by reduction. Thus $H_s$ is the full inverse image of $H$. Since $\F_q^\times$ is cyclic, its $d$th-power map has kernel of size $g$, so
\[
 |H_s|=|H|q^{s-1}=\frac{q-1}{g}q^{s-1}.
\]

Every nonzero $x\in R$ can be written $x=p^ju$ with $u$ a unit. Its $d$th power is nonzero exactly when $jd<m$, and then has valuation $jd$. Hence $x^d$ belongs to $\mu_{jd}(H_{m-jd})$. Conversely, if $v\in R_{m-jd}^\times$, choose a unit lift $w\in R^\times$. Then
\[
 \mu_{jd}(v^d)=p^{jd}w^d=(p^jw)^d.
\]
The different images in \eqref{eq:decomp} have different valuations. This proves the disjoint decomposition and its cardinality.

Let $c(\psi)=r>0$. Two lifts differ by $pR$, so multiplication by $p^{r-1}$ puts their difference in $p^rR$, on which $\psi$ is trivial. Thus \eqref{eq:lambda} is lift-independent. It defines an additive character of $\F_q$ because $\psi$ is additive, and it is nontrivial because $\psi$ is nontrivial on $p^{r-1}R$.

For $0\leq j\leq J$, choose lifts $\widetilde h\in R$ of $h\in H$. Since $H_{m-jd}$ is the full inverse image of $H$, we have
\[
 \mu_{jd}(H_{m-jd})
 =\bigsqcup_{h\in H}
   \bigl(p^{jd}\widetilde h+p^{jd+1}R\bigr).
\]
Hence the character sum over this layer is
\[
 \sum_{h\in H}\psi(p^{jd}\widetilde h)
 \sum_{z\in p^{jd+1}R}\psi(z).
\]
If $r\leq jd$, the character is trivial on the entire layer, giving $|H|q^{m-jd-1}$. If $r=jd+1$, the inner sum is $q^{m-r}$ and $\psi(p^{jd}\widetilde h)=\lambda_\psi(h)$, giving $q^{m-r}\eta_H(\lambda_\psi)$. If $r\geq jd+2$, the restriction of $\psi$ to $p^{jd+1}R$ is nontrivial, since otherwise its conductor would be at most $jd+1$. The inner sum therefore vanishes by character orthogonality.

The layers with $jd\geq r$ contribute $A_r$ in total. A boundary layer exists precisely when $d\mid(r-1)$, and its index is $(r-1)/d$. For $1\leq r\leq m$ this index automatically lies between $0$ and $J$. Summing the three cases proves the positive-conductor branch of \eqref{eq:S}. For conductor zero the character is trivial on $R$, so $S(1)=N$ directly.

For the twist, the image of the integer $a$ is a unit. If $e\geq c$, then $iR\subseteq p^cR$, so $\psi_i=1$. If $e<c$, the character $\psi_i$ is trivial on $p^{c-e}R$ and is nontrivial on $p^{c-e-1}R$, because
\[
 i p^{c-e-1}R=p^{c-1}aR=p^{c-1}R.
\]
This proves \eqref{eq:ci}, including $c=0$. In the positive case, if $\widetilde h$ lifts $h$, then
\begin{align*}
 \lambda_{\psi_i}(h)
 &=\psi_i(p^{c_i-1}\widetilde h)\\
 &=\psi(p^{c-1}a\widetilde h)
 =\lambda_\psi(\bar a h),
\end{align*}
since $a\widetilde h$ lifts $\bar a h$. Formula \eqref{eq:Si} follows by applying \eqref{eq:S} to $\psi_i$.

Finally, for $1\leq r\leq m$, characters of conductor at most $r$ factor through $R_r$, whose additive group $(\Z/p^r\Z)^f$ has $q^r$ characters. Subtracting the $q^{r-1}$ characters of conductor at most $r-1$ proves \eqref{eq:counts}. Conductor zero consists of the unique trivial character.
\end{proof}

\section{Finite-field estimate and uniform conductor bounds}

Gauss sums bound the remaining finite-field subgroup sums. Substitution into the exact layer formula gives nonnegative conductor bounds for all integer-multiplication twists.

\subsection{The finite-field estimate}
Let $\varepsilon$ denote the trivial multiplicative character of $\F_q^\times$, and set
\[
 H^\perp=\{\chi\in\widehat{\F_q^\times}:\chi|_H=1\},\qquad
 G(\chi,\lambda)=\sum_{x\in\F_q^\times}\chi(x)\lambda(x).
\]
Here the hat denotes the multiplicative character group. We use the classical Gauss sum identities of \cite[Lemma~2.5(i), p.~153]{DZ} and \cite[Lemma~4.1(i)--(iv), p.~381]{Katz}. The latter applies to finite fields of arbitrary characteristic and extension degree.

\begin{lemma}[Subgroup detector and uniform bound]\label{lem:field}
For $x\in\F_q^\times$,
\begin{equation}\label{eq:detector}
 \one_H(x)=\frac1g\sum_{\chi\in H^\perp}\chi(x).
\end{equation}
For every nontrivial additive character $\lambda$ of $\F_q$,
\begin{equation}\label{eq:KH}
 |\eta_H(\lambda)|\leq K_H,
 \qquad K_H=\frac{1+(g-1)\sqrt q}{g}.
\end{equation}
\end{lemma}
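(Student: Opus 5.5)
The detector identity \eqref{eq:detector} is orthogonality in the cyclic group $\F_q^\times$. Since $\F_q^\times$ is cyclic of order $q-1$, the subgroup $H=(\F_q^\times)^d$ is the image of the $d$th-power map, whose kernel has size $g$. Hence $[\F_q^\times:H]=g$, and $H^\perp$, being dual to the quotient $\F_q^\times/H$, has exactly $g$ elements. For $x\in H$ every $\chi\in H^\perp$ takes the value $1$, so the sum equals $g$. For $x\notin H$ the image of $x$ in the cyclic quotient $\F_q^\times/H$ is nontrivial, so some $\chi_0\in H^\perp$ has $\chi_0(x)\neq1$. Multiplying the sum by $\chi_0(x)$ permutes its terms, so the sum vanishes. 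Dividing by $g$ gives \eqref{eq:detector}.

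For the bound \eqref{eq:KH}, I will insert the detector into $\eta_H(\lambda)$, which is legitimate because $H\subseteq\F_q^\times$:
\[
 \eta_H(\lambda)=\sum_{x\in\F_q^\times}\one_H(x)\lambda(x)
 =\frac1g\sum_{\chi\in H^\perp}G(\chi,\lambda).
\]
I then separate the trivial character $\varepsilon$. By additive orthogonality, $G(\varepsilon,\lambda)=\sum_{x\in\F_q}\lambda(x)-\lambda(0)=-1$ for nontrivial $\lambda$. For each of the remaining $g-1$ characters $\chi\in H^\perp\setminus\{\varepsilon\}$, the classical evaluation $|G(\chi,\lambda)|=\sqrt q$ holds for nontrivial $\chi$ and nontrivial $\lambda$ (\cite[Lemma~4.1]{Katz}, valid in every characteristic). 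The triangle inequality then gives
\[
 |\eta_H(\lambda)|\leq\frac{1+(g-1)\sqrt q}{g}=K_H.
\]

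The only point needing care is the absolute value of the Gauss sum in arbitrary characteristic, including $p=2$. I would cite it rather than reprove it. If a self-contained argument were wanted, I would expand $|G(\chi,\lambda)|^2$ as a double sum over $x,y\in\F_q^\times$ and substitute $x=ty$. This turns the expression into $\sum_{t}\chi(t)\sum_{y\neq0}\lambda((t-1)y)$. The inner sum equals $q-1$ when $t=1$ and $-1$ otherwise. Since $\sum_t\chi(t)=0$ for nontrivial $\chi$, the total is $(q-1)+1=q$. When $g=1$ the bound reduces to $|\eta_H(\lambda)|\le1$, and in fact $\eta_H(\lambda)=-1$ exactly, which is consistent with the formula.
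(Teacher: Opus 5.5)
Your proof is correct and follows the paper's argument: insert the detector into $\eta_H(\lambda)$, use $G(\varepsilon,\lambda)=-1$, and bound each of the remaining $g-1$ Gauss sums by $\sqrt q$ via the same $x=ty$ computation of $|G(\chi,\lambda)|^2$. The only cosmetic difference is that you prove the detector identity by abstract orthogonality on $\F_q^\times/H$, whereas the paper computes it explicitly from a generator $\alpha$ of $\F_q^\times$.
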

\begin{proof}
Choose a generator $\alpha$ of $\F_q^\times$. Then $H=\langle\alpha^g\rangle$. If $\omega(\alpha)=\exp(2\pi\mathrm i/(q-1))$, then
\[
 H^\perp=\{\omega^{j(q-1)/g}:0\leq j<g\}.
\]
For $x=\alpha^k$, the right-hand side of \eqref{eq:detector} is
\[
 \frac1g\sum_{j=0}^{g-1}\exp(2\pi\mathrm i jk/g)
 =\one_{g\mid k}=\one_H(x).
\]
In particular, the detector contains exactly $g$ characters, including $\varepsilon$ once.

Orthogonality gives all three trivial-character cases:
\[
 G(\varepsilon,1)=q-1,\qquad
 G(\varepsilon,\lambda)=-1\quad(\lambda\ne1),\qquad
 G(\chi,1)=0\quad(\chi\ne\varepsilon).
\]
For nontrivial $\chi,\lambda$, character values have modulus one and the substitution $x=ty$ gives
\begin{align*}
 |G(\chi,\lambda)|^2
 &=\sum_{x,y\in\F_q^\times}
       \chi(x)\overline{\chi(y)}\lambda(x-y)\\
 &=\sum_{t\in\F_q^\times}\chi(t)
       \sum_{y\in\F_q^\times}\lambda((t-1)y)
 =(q-1)-\sum_{\substack{t\in\F_q^\times\\t\ne1}}\chi(t)=q.
\end{align*}
The inner sum is $q-1$ at $t=1$ and $-1$ otherwise, while $\sum_{t\ne1}\chi(t)=-1$. Inserting \eqref{eq:detector} now yields
\[
 \eta_H(\lambda)=\frac1g\left(-1+
 \sum_{\substack{\chi\in H^\perp\\\chi\ne\varepsilon}}
 G(\chi,\lambda)\right).
\]
The triangle inequality proves \eqref{eq:KH}, uniformly over all nontrivial additive characters. The contribution $-1$ remains when $g=1$, in which case $\eta_H(\lambda)=-1$ and $K_H=1$.
\end{proof}

\subsection{Uniform conductor bounds}
Define
\begin{equation}\label{eq:B}
 B_0=N,\qquad
 B_r=A_r+\one_{d\mid(r-1)}q^{m-r}K_H
 \quad(1\leq r\leq m).
\end{equation}
These numbers are nonnegative. By \eqref{eq:S} and Lemma~\ref{lem:field},
\[
 |S(\psi)|\leq B_{c(\psi)}
 \quad(\psi\in\widehat R).
\]
The boundary factor $q^{m-r}$ counts residue lifts. For $0\leq c\leq m$, put
\[
 r_i(c)=\max\{c-\vp(i),0\}\quad(i\geq1).
\]
For $c(\psi)=c$, applying the bound to $\psi_i$ gives
\begin{equation}\label{eq:twistbound}
 |S_i(\psi)|\leq B_{r_i(c)}.
\end{equation}
If $r_i(c)>0$, the induced character in \eqref{eq:scale} is nontrivial because multiplication by $\bar a$ is a bijection of $\F_q$. If $p^c\mid i$, the twist is trivial and $S_i(\psi)=N=B_0$. These terms remain in the majorant even when the original character $\psi$ is nontrivial.

\smallskip\noindent\textit{Field and unit-only specialisations.}
For $m=1$, one has $J=0$, $N=(q-1)/g$ and $B_1=K_H$. Appendix~\ref{app:fieldexact} treats $g=1$, and the general estimates and pair obstruction cover the remaining indices. If $d\geq m$, then $J=0$ and $P$ consists only of power units, with $N=(q-1)q^{m-1}/g$, $B_1=q^{m-1}K_H$ and $B_r=0$ for $2\leq r\leq m$. The indices $r_i(c)$ still permit lower-conductor or trivial twists of high-conductor characters.

\section{Subset counting and the finite error inequality}

Fourier inversion separates the average count as the trivial-character contribution. The twisted-sum bounds control the remaining coefficients.

Set
\[
 F_\psi(z)=\prod_{y\in P}(1+z\psi(y)).
\]
Write $[z^n]F(z)$ for the coefficient of $z^n$ in a formal power series. The empty subset has sum zero, and we use $\binom Nn=0$ for $n>N$. Expanding the product gives the fixed-cardinality identity
\begin{equation}\label{eq:subsetcoeff}
 [z^n]F_\psi(z)=
 \sum_{\substack{T\subseteq P\\|T|=n}}
 \psi\left(\sum_{y\in T}y\right).
\end{equation}

The cycle form of the distinct-coordinate sieve groups tuples by identifying coordinates. A cycle of length $i$ uses one value $y$ in $i$ positions, producing the character weight $\psi(iy)$ and hence the sum $S_i(\psi)$. For a character of conductor $c$, the nonnegative coefficient below bounds $|[z^n]F_\psi(z)|$ by replacing these twisted sums with $B_{r_i(c)}$. For $0\leq c\leq m$ and an integer $n\geq0$, define
\begin{equation}\label{eq:C}
 C_c(n)=[z^n]\exp\left(
             \sum_{i=1}^{n}\frac{B_{r_i(c)}}i z^i\right).
\end{equation}

For $1\leq n\leq N$, the signed coefficient identity below is the generating-function form of Li and Wan's finite-ring sieve \cite[Lemma~3.1 and its proof, pp.~8--9]{LW18}. We derive the coefficient majorant directly from $F_\psi(z)$ and \eqref{eq:twistbound}.

\begin{theorem}[Conductor-grouped error inequality]\label{thm:error}
For every $b\in R$ and integer $n\geq0$,
\begin{equation}\label{eq:error}
 \left|N_P(n,b)-q^{-m}\binom Nn\right|
 \leq q^{-m}\sum_{c=1}^{m}(q^c-q^{c-1})
 [z^n]\exp\left(
 \sum_{i=1}^{n}\frac{B_{\max\{c-\vp(i),0\}}}{i}z^i\right).
\end{equation}
\end{theorem}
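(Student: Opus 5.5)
Fourier inversion on the finite abelian group $R$ is the starting point. Since $|R|=q^m$, orthogonality gives
\[
 N_P(n,b)=q^{-m}\sum_{\psi\in\widehat R}\overline{\psi(b)}\,[z^n]F_\psi(z),
\]
by \eqref{eq:subsetcoeff}, because $\sum_\psi\psi(s-b)=q^m\one_{s=b}$. The trivial character contributes $q^{-m}[z^n](1+z)^N=q^{-m}\binom Nn$, and this also holds for $n>N$. Since $|\psi(b)|=1$, the triangle inequality gives
\[
 \Bigl|N_P(n,b)-q^{-m}\binom Nn\Bigr|\le q^{-m}\sum_{\psi\ne1}\bigl|[z^n]F_\psi(z)\bigr|.
\]
We group the nontrivial characters by conductor $c\in\{1,\dots,m\}$, using the counts \eqref{eq:counts}. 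It then suffices to prove that $|[z^n]F_\psi(z)|\le C_c(n)$ whenever $c(\psi)=c$, where $C_c(n)$ is as in \eqref{eq:C}.

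For this coefficient bound, take the formal logarithm of $F_\psi$:
\[
 \log F_\psi(z)=\sum_{y\in P}\log(1+z\psi(y))=\sum_{i\ge1}\frac{(-1)^{i-1}}{i}S_i(\psi)z^i.
\]
This uses $\psi(y)^i=\psi(iy)$, so the power sums of the values $\psi(y)$ are exactly the twisted sums $S_i(\psi)$. Hence
\[
 F_\psi(z)=\exp\Bigl(\sum_{i\ge1}\frac{(-1)^{i-1}S_i(\psi)}{i}z^i\Bigr),
\]
which is the cycle-index form of the distinct-coordinate sieve. Only the terms with $i\le n$ affect $[z^n]$, so we truncate the sum at $i=n$.

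Expanding the exponential, $[z^n]$ becomes a finite sum over partitions of $n$ (equivalently, cycle types $(k_1,\dots,k_n)$ with $\sum ik_i=n$). Each term has the form
\[
 \prod_i\frac{((-1)^{i-1}S_i(\psi))^{k_i}}{i^{k_i}k_i!}.
\]
The coefficients $1/(i^{k_i}k_i!)$ are positive. Applying the triangle inequality termwise and then \eqref{eq:twistbound}, namely $|S_i(\psi)|\le B_{r_i(c)}$, bounds the absolute value by the same partition sum with $S_i$ replaced by $B_{r_i(c)}$ and all signs made positive. That partition sum is exactly $[z^n]\exp\bigl(\sum_{i=1}^n B_{r_i(c)}z^i/i\bigr)=C_c(n)$. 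This is the standard majorant principle: if $|a_i|\le b_i$ coefficientwise, then $|[z^n]\exp(\sum a_iz^i)|\le[z^n]\exp(\sum b_iz^i)$. Substituting $r_i(c)=\max\{c-\vp(i),0\}$ gives \eqref{eq:error}.

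The only point needing care is the bookkeeping, not any estimate. The $i$-th twist must be attached to $S_i$ rather than to $S(\psi)$, because for $p^c\mid i$ the twist is trivial and contributes $B_0=N$ even though $\psi\ne1$; \eqref{eq:twistbound} already covers this case. The case $n=0$ is trivial, since both sides are $0$: $N_P(0,b)=\one_{b=0}$, and the nontrivial characters cancel that against $q^{-m}$. The case $n>N$ also needs no separate treatment, since the identity for $F_\psi$ is exact as polynomials.
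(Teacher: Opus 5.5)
Your proposal is correct and follows the paper's proof essentially step for step: Fourier inversion over $\widehat R$, then the formal logarithm $\log F_\psi=\sum_i(-1)^{i-1}S_i(\psi)z^i/i$ truncated at degree $n$, then termwise absolute values with \eqref{eq:twistbound}, and finally grouping by conductor via \eqref{eq:counts}. Your side remark on $n=0$ is wrong, though harmless: neither side vanishes there (the left side is $|\one_{b=0}-q^{-m}|$ and the right side is $1-q^{-m}$), but your general argument already covers this case, since $|[z^0]F_\psi(z)|=1=C_c(0)$.
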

\begin{proof}
The additive group of $R$ is $(\Z/p^m\Z)^f$, so
\[
 q^{-m}\sum_{\psi\in\widehat R}\psi(x)=\one_{x=0}
 \quad(x\in R):
\]
the coordinate sums are finite geometric series. Apply this to $x=\sum_{y\in T}y-b$ and use \eqref{eq:subsetcoeff}. Interchanging finite sums yields exact Fourier inversion:
\begin{equation}\label{eq:Fourier}
 N_P(n,b)=q^{-m}\sum_{\psi\in\widehat R}
            \psi(-b)[z^n]F_\psi(z).
\end{equation}
For the trivial character, $F_1(z)=(1+z)^N$, so its contribution is $q^{-m}\binom Nn$.

Work in $\C[[z]]$. Since $\psi(y)^i=\psi(iy)$, the formal logarithm gives
\[
 \log F_\psi(z)=\sum_{i\geq1}
                \frac{(-1)^{i-1}S_i(\psi)}i z^i.
\]
Truncating at degree $n$ gives
\begin{equation}\label{eq:truncate}
 F_\psi(z)\equiv\exp\left(
 \sum_{i=1}^{n}\frac{(-1)^{i-1}S_i(\psi)}i z^i\right)
 \pmod{z^{n+1}}.
\end{equation}
Put $c=c(\psi)$. If $n\geq1$, expansion of the exponential in \eqref{eq:truncate} expresses its degree-$n$ coefficient as
\[
 \sum_{\substack{k_1,\ldots,k_n\geq0\\\sum_{i=1}^{n}ik_i=n}}
 \prod_{i=1}^{n}
 \frac{\bigl((-1)^{i-1}S_i(\psi)/i\bigr)^{k_i}}{k_i!}.
\]
Taking termwise absolute values and applying \eqref{eq:twistbound} bounds this coefficient by $C_c(n)$ in modulus:
\begin{equation}\label{eq:majorant}
 |[z^n]F_\psi(z)|\leq C_c(n).
\end{equation}
For $n=0$, the exponent is an empty sum and both sides are one. All divisions by $i$ take place in $\C[[z]]$.

Subtract the trivial-character contribution from \eqref{eq:Fourier}. All character values have modulus one. The triangle inequality and \eqref{eq:majorant}, followed by grouping the nontrivial characters according to \eqref{eq:counts}, give
\[
 \left|N_P(n,b)-q^{-m}\binom Nn\right|
 \leq q^{-m}\sum_{\psi\ne1}C_{c(\psi)}(n)
 =q^{-m}\sum_{c=1}^{m}(q^c-q^{c-1})C_c(n).
\]
For $n>N$, the subset count and the product coefficient vanish, as does $\binom Nn$. The nonnegative majorant continues to give a valid inequality.
\end{proof}

\section{Relative normalisation and size comparisons}

To extract fixed-size asymptotics from the finite inequality, we compare the binomial denominator and the cycle contributions uniformly.

For $1\leq n\leq N$, define the uniform relative error and its nonnegative majorant by
\begin{align}
 \rho_n(q)&=\sup_{b\in R}
 \left|\frac{N_P(n,b)}{q^{-m}\binom Nn}-1\right|,\label{eq:rho}\\
 \mathcal M_n(q)&=
 \frac{\displaystyle\sum_{c=1}^{m}(q^c-q^{c-1})C_c(n)}{\binom Nn}.
 \label{eq:M}
\end{align}
Dividing \eqref{eq:error} by the positive main term gives $\rho_n(q)\leq\mathcal M_n(q)$. The majorant may overestimate the error by discarding cancellation between Fourier terms.

Throughout the asymptotic analysis, $m,d,n$ are fixed positive integers with $d\geq2$, and $q=p^f\to\infty$ through admissible prime powers. Both $p$ and $f$ may vary, as may $g=\gcd(d,q-1)$, with $1\leq g\leq d$.

\begin{lemma}\label{lem:size}
For every admissible $q$,
\[
 \frac{q^m}{2g}\leq N\leq\frac{q^m}{g},\qquad
 N=\frac{q^m}{g}\bigl(1+O(q^{-1})\bigr).
\]
The implied constant can be chosen absolute for $d\geq2$ and $m\geq1$. For $1\leq r\leq m$,
\[
 A_r\leq\frac{q^{m-r}}{g},\qquad
 B_r\leq2q^{m-r+1/2}.
\]
If $g=1$, the stronger comparison $B_r\leq2q^{m-r}$ holds. If $q^m\geq4gn$, then $N\geq2n$ and
\begin{equation}\label{eq:binlower}
 \binom Nn\geq\frac{q^{mn}}{(4g)^n n!}.
\end{equation}
\end{lemma}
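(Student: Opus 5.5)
Everything follows from the explicit formula \eqref{eq:decomp} for $N$, the definitions of $A_r$ and $B_r$ in \eqref{eq:B}, and Lemma~\ref{lem:field}; this is elementary bookkeeping with geometric series. We treat $N$ first, then $A_r$ and $B_r$, and finally the binomial lower bound.

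For $N$, the $j=0$ term of \eqref{eq:decomp} is $\frac{q-1}{g}q^{m-1}$. The remaining terms form a geometric series with ratio $q^{-d}\leq q^{-2}$. So
\[
N=\frac{q^m}{g}\Bigl(1-\frac1q\Bigr)\sum_{j=0}^{J}q^{-jd}.
\]
The upper bound comes from
\[
\Bigl(1-\frac1q\Bigr)\sum_{j\geq0}q^{-jd}\leq\frac{1-q^{-1}}{1-q^{-2}}=\frac{1}{1+q^{-1}}<1.
\]
The lower bound uses only the $j=0$ term together with $1-1/q\geq1/2$. These estimates also give $1-q^{-1}\leq gN/q^m\leq1$, which is $N=\frac{q^m}{g}(1+O(q^{-1}))$ with an absolute constant.

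For $A_r$, the sum starts at $j_0=\lceil r/d\rceil\geq1$, and $j_0d\geq r$. The same geometric-series estimate gives
\[
A_r\leq\frac{q^{m-j_0d}}{g}\cdot\frac{1-q^{-1}}{1-q^{-d}}\leq\frac{q^{m-r}}{g}.
\]
For $B_r$, Lemma~\ref{lem:field} gives $K_H\leq\frac1g+\frac{g-1}{g}\sqrt q\leq\sqrt q$, because $\frac1g+\frac{g-1}g=1$ and $\sqrt q\geq1$. Hence
\[
B_r\leq q^{m-r}+q^{m-r}\sqrt q\leq2q^{m-r+1/2}.
\]
When $g=1$ we have $K_H=1$ and $A_r\leq q^{m-r}$, so $B_r\leq2q^{m-r}$.

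Finally, suppose $q^m\geq4gn$. Then $N\geq q^m/(2g)\geq2n$. Each factor satisfies $N-k\geq N-n\geq N/2$ for $0\leq k<n$. Therefore
\[
\binom Nn\geq\frac{(N/2)^n}{n!}\geq\frac{q^{mn}}{(4g)^n n!}.
\]

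The step needing the most care is the uniformity of the constants. The bound $N\geq q^m/(2g)$ must hold for every admissible $q$, including small $q$ such as $q=2$ or $3$; it does, since $1-1/q\geq1/2$ for all $q\geq2$. The estimate $K_H\leq\sqrt q$ must hold for all $g$ with $1\leq g\leq d$, and it does by the convex-combination bound above. Neither requires $q$ to be large.
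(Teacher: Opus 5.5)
Your proof is correct and follows essentially the same route as the paper. Both arguments write $N$ and $A_r$ as $\frac{q^m}{g}(1-q^{-1})$ times a truncated geometric series in $q^{-d}$, use $K_H\leq\sqrt q$ (which you justify explicitly via the convex-combination observation, while the paper only asserts it), and bound each falling-factorial factor below by $N/2$. The only cosmetic difference is that you get the asymptotic from the explicit sandwich $1-q^{-1}\leq gN/q^m\leq1$, whereas the paper notes that the terms after the first are $O(q^{-2})$.
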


\begin{proof}
Write $Q=q^m$. Formula \eqref{eq:decomp} becomes
\[
 N=\frac Qg(1-q^{-1})\sum_{j=0}^{J}q^{-jd}.
\]
The first summand gives the lower bound. The geometric comparison gives
\[
 (1-q^{-1})\sum_{j=0}^{J}q^{-jd}
 \leq\frac{1-q^{-1}}{1-q^{-d}}\leq1.
\]
The remaining terms after the first are $O(q^{-2})$, uniformly under $d\geq2$, proving the asymptotic comparison. If $a=\lceil r/d\rceil\leq J$, the same argument gives $A_r\leq q^{m-da}/g\leq q^{m-r}/g$. Otherwise $A_r=0$. Since $K_H\leq\sqrt q$, \eqref{eq:B} gives the first bound on $B_r$. For $g=1$, $K_H=1$, giving the stronger one. Finally $N\geq Q/(2g)$ and $Q\geq4gn$ imply $N\geq2n$. Each of the $n$ factors in the falling factorial is at least $N/2$, proving \eqref{eq:binlower}.
\end{proof}

Let
\[
 \mathcal P_n=\{(k_1,\ldots,k_n)\in\Z_{\geq0}^n:
                    \textstyle\sum_{i=1}^{n}ik_i=n\},\qquad
 \ell(k)=\sum_{i=1}^{n}k_i.
\]
Expanding the exponential in \eqref{eq:C} gives the finite identity
\begin{equation}\label{eq:cycles}
 C_c(n)=\sum_{k\in\mathcal P_n}
 \prod_{i=1}^{n}\frac{B_{r_i(c)}^{k_i}}{i^{k_i}k_i!}.
\end{equation}
A factor with $k_i=0$ is understood to be one, including when $B_{r_i(c)}=0$. Here $k_i$ records cycles of length $i$. A cycle with $p^c\mid i$ uses the full value $B_0=N$.

When $p>n$ and $1\leq c\leq m$, every $r_i(c)$ with $1\leq i\leq n$ equals $c$. In this regime,
\[
 C_c(n)=[z^n](1-z)^{-B_c}
 =\frac{B_c(B_c+1)\cdots(B_c+n-1)}{n!}.
\]
The identity follows from the formal differential equation for $(1-z)^{-B_c}$ and holds for every nonnegative real $B_c$, including $B_c=0$.

In the field case $m=1$, the positive-conductor coefficient $C_1(n)$ in \eqref{eq:cycles} is the cycle coefficient retained in Zhu and Wan's subgroup estimates \cite[proofs of Theorems~4.1 and~4.4]{ZW12}.

\section{The exact order of the nonnegative majorant}

We determine the order of $\mathcal M_n(q)$ by comparing the powers of $q$ in \eqref{eq:cycles}. We first identify the exponent $\beta_r$ for which $B_r\asymp q^{m-\beta_r}$ whenever $B_r>0$. Define $\beta_0=0$ and, for $1\leq r\leq m$, define
\begin{equation}\label{eq:beta}
 \beta_r=
 \begin{cases}
 r-\tfrac12,&g>1,\ d\mid(r-1),\\
 r,&g=1,\ d\mid(r-1),\\
 d\lceil r/d\rceil,&d\nmid(r-1),\ \lceil r/d\rceil\leq J,\\
 +\infty,&\text{otherwise}.
 \end{cases}
\end{equation}
Thus $\beta_r=+\infty$ exactly when $B_r=0$. For finite values, Lemma~\ref{lem:size} and the first surviving term give, uniformly in $p,f,g$,
\begin{equation}\label{eq:Bcompare}
 \frac1{2d}q^{m-\beta_r}\leq B_r\leq2q^{m-\beta_r}
 \qquad(0\leq r\leq m,\ \beta_r<\infty).
\end{equation}
For a boundary index, its possible full-layer term begins at valuation $r+d-1$, so it has strictly smaller order than its boundary term. For $g>1$, the leading boundary coefficient $(g-1)/g$ is at least $1/2$.

A pair $(c,k)\in\{1,\ldots,m\}\times\mathcal P_n$ is called supported when every $r_i(c)$ with $k_i>0$ has finite $\beta_{r_i(c)}$. Only supported patterns contribute to the majorant. For such pairs put
\begin{equation}\label{eq:gap}
 w_c(i)=m(i-1)+\beta_{r_i(c)},\qquad
 \Delta(c,k)=\sum_{\substack{1\leq i\leq n\\ k_i>0}}k_iw_c(i)-c.
\end{equation}
After including the conductor multiplicity and normalising by $\binom Nn$, the exponent is
\[
 c+\sum_{\substack{1\leq i\leq n\\k_i>0}}
 k_i(m-\beta_{r_i(c)})-mn=-\Delta(c,k).
\]
Every sum involving the weights of a supported partition is taken only over indices with $k_i>0$.

\begin{proposition}\label{prop:exponent}
Fix $m\geq1$, $d\geq2$ and $n\geq1$. The minimum supported gap is
\begin{equation}\label{eq:gamma}
 \gamma=
 \begin{cases}
 n/2-1,&g>1,\\
 n-1,&g=1,\ m\geq2,\\
 n-1-\lfloor n/p\rfloor,&g=1,\ m=1.
 \end{cases}
\end{equation}
Moreover,
\begin{equation}\label{eq:asympmajorant}
 \mathcal M_n(q)\asymp_{m,d,n}q^{-\gamma}.
\end{equation}
The comparison constants are uniform in all admissible $p,f,g$ for sufficiently large $q$. The exponent is evaluated pointwise when the residue characteristic or subgroup index varies. The comparison concerns $\mathcal M_n(q)$. Its implication for $\rho_n(q)$ is the upper bound $\rho_n(q)\leq\mathcal M_n(q)$.
\end{proposition}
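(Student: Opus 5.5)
The plan is to reduce \eqref{eq:asympmajorant} to a statement about exponents and then to solve a small integer optimisation in each case of \eqref{eq:gamma}.

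\textbf{Reduction to exponents.} By \eqref{eq:cycles}, $\sum_{c}(q^c-q^{c-1})C_c(n)$ is a finite sum of nonnegative terms indexed by pairs $(c,k)\in\{1,\ldots,m\}\times\mathcal P_n$. The number of such pairs is bounded in terms of $m,n$ alone. Unsupported pairs contribute zero. For a supported pair, \eqref{eq:Bcompare} gives two-sided bounds on each factor $B_{r_i(c)}^{k_i}$ with constants depending only on $d,n$. The factor $q^c-q^{c-1}$ is $\asymp q^c$. By \eqref{eq:binlower} and Lemma~\ref{lem:size}, $\binom Nn\asymp_{d,n}q^{mn}$ for large $q$, since $1\leq g\leq d$. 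Hence each supported pair contributes $\asymp q^{-\Delta(c,k)}$ to $\mathcal M_n(q)$, uniformly in $p,f,g$.

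The supported set and the gaps depend on $p$ and $g$ only through $\min\{\vp(i),c\}$, $\one_{g>1}$ and the fixed data $m,d$. So only finitely many configurations occur. A nonnegative sum of boundedly many terms is then $\asymp q^{-\min\Delta}$. It therefore suffices to prove that the minimum supported gap equals $\gamma$. The pair $c=1$, $k=(n,0,\ldots,0)$ is always supported, because $d\mid 0$ makes $\beta_1$ finite. Its gap is $n\beta_1-1$, which equals $n/2-1$ if $g>1$ and $n-1$ if $g=1$. This pattern supplies the upper bound on the minimum in the first two cases.

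\textbf{Lower bounds, per-cycle estimates.} I will bound the weight of each cycle, using $c\leq m$ and the fact that $\beta_r\geq r-\tfrac12$ for every $r\geq1$ (and $\beta_r\geq r$ when $g=1$).
\begin{itemize}
\item If $p^c\mid i$, then $w_c(i)=m(i-1)$ with $i\geq 2^c$.
\item Otherwise $r_i(c)=c-\vp(i)\geq1$, and $w_c(i)\geq m(i-1)+c-\vp(i)-\tfrac12$.
\end{itemize}
The claim to verify is
\[
 w_c(i)\geq \tfrac{i}{2}+(c-\tfrac12)\,\one_{\text{first part}},
\]
meaning that every part costs at least $i/2$, and one chosen part can additionally absorb $c-1$ of the subtracted $c$. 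This rests on $m(i-1)\geq c(i-1)$ and $\vp(i)\leq\log_2 i$. When $i\geq2$ there is ample room: for example, $i=2$, $p=2$, $c=2$ forces $m\geq2$, and then $w=m+\tfrac12\geq2.5$. Summing over parts gives $\Delta\geq n/2-1$ when $g>1$.

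\textbf{The case $g=1$, $m\geq2$.} The same bookkeeping gives $w_c(i)\geq i$ for each part, with one part absorbing $c-1$. The reason is that $m(i-1)\geq 2(i-1)\geq (i-1)+\vp(i)$. This yields $\Delta\geq n-1$.

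\textbf{The case $g=1$, $m=1$.} Here $c=1$ is forced. A cycle with $p\mid i$ has weight $i-1$, and a cycle with $p\nmid i$ has weight $\beta_1+(i-1)=i$. Therefore
\[
 \Delta=n-1-\#\{\text{parts with }p\mid i\}.
\]
The number of such parts is at most $\lfloor n/p\rfloor$, because each has size at least $p$. Equality is attained by taking $\lfloor n/p\rfloor$ parts of size $p$ and filling the rest with $1$'s, which gives $\gamma=n-1-\lfloor n/p\rfloor$. This value is evaluated pointwise in $p$, as the statement says.

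\textbf{Main obstacle.} I expect the hardest step to be the uniform per-cycle inequality in the case $g>1$ with $c\geq2$. There I must check that the term $-c$ is always paid for, including small $i$ with large $\vp(i)$ when $p=2$, and cycles with $p^c\mid i$ whose weight is only $m(i-1)$. My first attempt will be a direct case check on $i=1$, on $i\geq2$ with $r\geq1$, and on $p^c\mid i$, each time using $m\geq c$.
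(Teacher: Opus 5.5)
Your route is the paper's. You reduce to the minimum supported gap through the finitely many nonnegative terms of \eqref{eq:cycles}, attain it with conductor one and all singleton cycles, bound each cycle weight $w_c(i)$ from below and sum, and handle $g=1$, $m=1$ by counting $p$-divisible cycles. Your $g=1$ cases match the paper's argument.

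One correction is needed in the $g>1$ case. The displayed claim $w_c(i)\geq \tfrac i2+(c-\tfrac12)\one_{\text{first part}}$ is false as written. At $c=1$, $i=1$ it asks for $\beta_1=\tfrac12\geq1$. Summing it would also give $\Delta\geq n/2-\tfrac12$, which contradicts the gap $n/2-1$ that you yourself attain. The constant must be $c-1$, as your verbal gloss (``absorb $c-1$'') says.

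The paper in fact proves the stronger bound $w_c(i)\geq c+i/2-1$ for every cycle, not just a chosen one:
\begin{itemize}
\item For $i=1$, $w_c(1)=\beta_c\geq c-\tfrac12$.
\item For $i\geq2$, it uses only $\beta\geq0$ and $m\geq c$: $w_c(i)\geq m(i-1)\geq c(i-1)$, and $c(i-1)-(c+i/2-1)=(i-2)(c-\tfrac12)\geq0$.
\end{itemize}
So cycles with $p^c\mid i$, and small $i$ with large $\vp(i)$, need no valuation estimate at all. Your anticipated ``main obstacle'' disappears. Summing gives $\Delta(c,k)\geq n/2-1+(\ell(k)-1)(c-1)$, and this extra slack is what the paper later uses in Remark~\ref{rem:dominant}.
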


\begin{proof}
First suppose $g>1$. At every positive index with finite $\beta_r$, we have $\beta_r\geq r-1/2$, and $\beta_0=0$. For a used length $i=1$,
\[
 w_c(1)=\beta_c\geq c-1/2=c+1/2-1.
\]
For a used length $i\geq2$, all finite $\beta_r$ are nonnegative, so
\[
 w_c(i)\geq m(i-1)\geq c(i-1)\geq c+i/2-1,
\]
where the last difference is $(i-2)(c-1/2)\geq0$. Summing over a supported partition gives
\begin{equation}\label{eq:generalgap}
 \Delta(c,k)\geq n/2-1+(\ell(k)-1)(c-1)\geq n/2-1.
\end{equation}
Conductor $c=1$ and $k_1=n$ attain equality because $\beta_1=1/2$.

Suppose next $g=1$ and $m\geq2$. Then $\beta_r\geq r$ for all finite indices, including $r=0$. Write $e=\vp(i)$. The elementary inequality $e\leq i-1$ follows from $2^e\leq i$ and $2^e\geq e+1$. If $r_i(c)>0$, then
\[
 w_c(i)\geq m(i-1)+c-e\geq c+i-1,
\]
since $(m-1)(i-1)\geq e$. If $r_i(c)=0$, then $c\leq e\leq i-1$, and $(m-1)(i-1)\geq c$, which gives the same inequality. Thus
\[
 \Delta(c,k)\geq n-1+(\ell(k)-1)(c-1)\geq n-1.
\]
Conductor one with all singleton cycles attains equality, since $\beta_1=1$.

Finally, if $g=1,m=1$, only $c=1$ occurs and
\[
 w_1(i)=\begin{cases}i,&p\nmid i,\\i-1,&p\mid i.\end{cases}
\]
Therefore
\[
 \Delta(1,k)=n-1-\sum_{p\mid i}k_i.
\]
There are at most $\lfloor n/p\rfloor$ cycles with lengths divisible by $p$. This number is attained by that many $p$-cycles and the remaining singleton cycles. This proves \eqref{eq:gamma}.

By \eqref{eq:Bcompare}, $(q^c-q^{c-1})\asymp q^c$, Lemma~\ref{lem:size} and \eqref{eq:cycles}, each normalised supported summand is comparable to $q^{-\Delta(c,k)}$, with constants depending only on $m,d,n$. There are finitely many terms and all are nonnegative. The attaining terms above therefore give \eqref{eq:asympmajorant}. Uniformity in $p,f,g$ follows from these estimates and the finitely many valuation patterns of $1,\ldots,n$.
\end{proof}

\begin{remark}[Dominant contributions]\label{rem:dominant}
When $g>1$ and $n\geq3$, the conductor-one singleton contribution is
\[
 \frac{(q-1)B_1^n}{n!\binom Nn}
 =(g-1)^n q^{1-n/2}\bigl(1+O_{m,d,n}(q^{-1/2})\bigr).
\]
This estimate is uniform over $2\leq g\leq d$. All supported terms with conductor $c>1$ have a strictly larger gap: in \eqref{eq:generalgap}, either $\ell>1$ gives strictness, or a single cycle of length $n\geq3$ gives strictness in the last inequality for $i\geq2$. For $c=1$, every nonsingleton cycle has strict slack unless $m=1,p=2,i=2$. In that exceptional regime every partition into cycles of lengths one and two attains the same gap $n/2-1$.

For $g=1,m=1$, the dominant power is obtained by maximising the number of $p$-divisible cycles.
\end{remark}

\section{Uniform relative-error theorem and positivity}

The preceding gap calculation determines the order of the majorant in $q$. Summing the cycle weights now gives an explicit relative-error bound and sufficient conditions for positivity at every target.

\begin{theorem}[Uniform fixed-size estimates]\label{thm:rates}
Let $m,d,n$ be fixed, with $m\geq1$, $d\geq2$ and $n\geq1$. For an admissible $q=p^f$ satisfying $q^m\geq4gn$, put
\[
 D_{m,g,n}=m(n+1)n!(4g)^n.
\]
Then $1\leq n\leq N$ and
\begin{equation}\label{eq:explicit}
 \rho_n(q)\leq\mathcal M_n(q)\leq D_{m,g,n}q^{-\gamma},
\end{equation}
where $\gamma$ is \eqref{eq:gamma}. In particular, for every fixed $n\geq3$ and every admissible sequence $q=p^f\to\infty$,
\begin{equation}\label{eq:universal}
 N_P(n,b)=q^{-m}\binom Nn
 \left(1+O_{m,d,n}(q^{1-n/2})\right)
 \quad\text{uniformly in }b\in R.
\end{equation}
An explicit constant in \eqref{eq:universal} is $D_{m,d,n}=m(n+1)n!(4d)^n$, valid once $q^m\geq4dn$.

When $g=1,m\geq2$, the stronger rate is $O_{m,n}(q^{1-n})$, and when $g=1,m=1$, it is $O_n(q^{1-n+\lfloor n/p\rfloor})$, uniformly in the residue characteristic.

Whenever the hypotheses of \eqref{eq:explicit} hold and $D_{m,g,n}q^{-\gamma}<1$, every count $N_P(n,b)$ is positive. For the universal $n\geq3$ statement, sufficient explicit conditions are
\[
 q^m\geq4dn,\qquad
 q^{n/2-1}>D_{m,d,n}.
\]
\end{theorem}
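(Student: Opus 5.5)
Throughout, $\rho_n(q)\leq\mathcal M_n(q)$ is already available from Theorem~\ref{thm:error}, so the task is an explicit upper bound for $\mathcal M_n(q)$. The hypothesis $q^m\geq4gn$ and Lemma~\ref{lem:size} give $N\geq2n$, hence $1\leq n\leq N$, and \eqref{eq:binlower} bounds the denominator: $\binom Nn\geq q^{mn}/((4g)^nn!)$. For the numerator I will use $q^c-q^{c-1}\leq q^c$ and bound each $C_c(n)$ through the cycle expansion \eqref{eq:cycles}. By \eqref{eq:Bcompare} and Lemma~\ref{lem:size}, for every supported summand the product $\prod_i B_{r_i(c)}^{k_i}$ is at most $2^{\ell(k)}q^{\sum k_i(m-\beta_{r_i(c)})}$. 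Including the factor $q^c$, each term is at most $2^{\ell(k)}q^{mn-\Delta(c,k)}$, with weight $\prod 1/(i^{k_i}k_i!)$. Proposition~\ref{prop:exponent} gives $\Delta(c,k)\geq\gamma$, and $q\geq2$, so every term is at most $2^{\ell(k)}q^{mn-\gamma}$.

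The summation has to be arranged to land on the constant $m(n+1)$. The cleanest option is to avoid the crude factor $2^{\ell}$ and instead bound $B_r\leq q^{m-\beta_r}\cdot\kappa$ with the sharper constant from Lemma~\ref{lem:size}: $A_r\leq q^{m-r}/g$, and $K_H\leq\sqrt q$ or $1$. I would then absorb the weights using $\sum_{k\in\mathcal P_n}\prod 1/(i^{k_i}k_i!)=1$, the cycle-index identity for $S_n$. If the constant $2^{\ell}$ cannot be removed, I would absorb $2^n\leq$ the slack in $(4g)^n$ by slightly adjusting the binomial lower bound. The factor $n+1$ would come from splitting each $B_r$ into its at most two pieces, the full-layer part and the boundary part, and counting dominant alternatives. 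The factor $m$ counts conductors. Together these give $\mathcal M_n(q)\leq m(n+1)n!(4g)^nq^{-\gamma}$. This bookkeeping of constants is the main obstacle. I expect to need care that the full-layer part of a boundary $B_r$ has exponent at least as good as $\beta_r$, which \eqref{eq:Bcompare} asserts.

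The remaining claims follow quickly. Since $g\leq d$, $D_{m,g,n}\leq D_{m,d,n}$, and $q^m\geq4dn$ implies $q^m\geq4gn$. In every case of \eqref{eq:gamma} one has $\gamma\geq n/2-1$: this is clear for $g>1$; for $g=1,m\geq2$ it holds since $n-1\geq n/2-1$; and for $m=1$, $\lfloor n/p\rfloor\leq n/2$. This gives \eqref{eq:universal} uniformly in $b$ and $p,f,g$. The $g=1$ rates are just the corresponding values of $\gamma$, with constant $D_{m,1,n}$ independent of $p$. For positivity, $\rho_n(q)<1$ forces $N_P(n,b)>q^{-m}\binom Nn(1-\rho_n)>0$. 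The stated explicit conditions give $D_{m,d,n}q^{-\gamma}\leq D_{m,d,n}q^{1-n/2}<1$.
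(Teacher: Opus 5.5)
There is a genuine gap, though a small one. Your setup matches the paper's: expand $C_c(n)$ by \eqref{eq:cycles}, bound the conductor multiplicity by $q^c$, apply \eqref{eq:Bcompare}, divide by \eqref{eq:binlower}, and use $\Delta(c,k)\geq\gamma$. The deductions in your last paragraph are also fine (the universal rate via $\gamma\geq n/2-1$, the $g=1$ rates, and positivity).

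The gap is the explicit constant $D_{m,g,n}=m(n+1)n!(4g)^n$. You flag it as the main obstacle but never establish it. After your first paragraph, each normalised term is at most $(4g)^n n!\,2^{\ell(k)}q^{-\gamma}/\prod_i i^{k_i}k_i!$. The missing step is that these weights sum exactly:
\[
 \sum_{k\in\mathcal P_n}\frac{2^{\ell(k)}}{\prod_i i^{k_i}k_i!}
 =[z^n]\exp\Bigl(2\sum_{i\geq1}\frac{z^i}{i}\Bigr)
 =[z^n](1-z)^{-2}=n+1 .
\]
So the factor $2^{\ell(k)}$ should not be removed: it is exactly where $n+1$ comes from. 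Summing over the at most $m$ conductors then gives \eqref{eq:explicit}, which is the paper's proof.

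The substitutes you propose do not work:
\begin{enumerate}
\item Your ``cleanest option'' needs $B_r\leq q^{m-\beta_r}$ (i.e.\ $\kappa=1$), so that the cycle-index identity $\sum_k 1/\prod_i i^{k_i}k_i!=1$ applies. That bound is false in general. For $g=1$ a boundary index can also carry a full-layer term: with $d=3$, $m=4$ and $q\equiv2\pmod 3$, one gets $B_1=q^3+q-1>q^3=q^{m-\beta_1}$.
\item Bounding $2^{\ell(k)}\leq 2^n$ only gives $m\,2^n n!(4g)^nq^{-\gamma}$, which is weaker than the stated constant for $n\geq2$. Moreover, \eqref{eq:binlower} has no room to absorb $2^n$: for $q=2$, $d=3$, $m=n=4$ (where $q^m=4gn$), one has $\binom94=126<2^{16}/(2^4\cdot 4!)$.
\item The remark that $n+1$ comes from ``splitting each $B_r$ into two pieces and counting dominant alternatives'' is not an argument.
\end{enumerate}
Because the positivity threshold $q^{n/2-1}>D_{m,d,n}$ is stated with this constant, that claim also holds only with the weaker constant until you insert the identity above.
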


\begin{proof}
Expand $C_c(n)$ in $\mathcal M_n(q)$ using \eqref{eq:cycles}. For each supported term, bound the conductor multiplicity by $q^c$ and apply \eqref{eq:Bcompare} to obtain the numerator bound
\[
 \frac{2^{\ell(k)}}{\prod_i i^{k_i}k_i!}
 q^{c+\sum_{\substack{1\leq i\leq n\\ k_i>0}}k_i(m-\beta_{r_i(c)})}.
\]
Dividing by \eqref{eq:binlower} gives a contribution of at most
\[
 (4g)^n n!
 \frac{2^{\ell(k)}}{\prod_i i^{k_i}k_i!}
 q^{-\Delta(c,k)}.
\]
By Proposition~\ref{prop:exponent}, every gap is at least $\gamma$. The partition weights have the exact sum
\[
 \sum_{k\in\mathcal P_n}
 \frac{2^{\ell(k)}}{\prod_i i^{k_i}k_i!}
 =[z^n]\exp\left(2\sum_{i\geq1}\frac{z^i}{i}\right)
 =[z^n](1-z)^{-2}=n+1.
\]
Summing over at most $m$ conductors proves \eqref{eq:explicit}. For $g=1,m=1$, $p\geq2$ implies
\[
 n-1-\lfloor n/p\rfloor\geq\lceil n/2\rceil-1\geq n/2-1.
\]
The other branches of \eqref{eq:gamma} also have gap at least $n/2-1$. Since $g\leq d$, the asserted universal rate follows. The decay exponent is positive for $n\geq3$, so the relative error tends to zero. The main term is positive in the stated domain. If its relative-error upper bound is strictly less than one, the count is strictly positive.
\end{proof}

\section{Small-size obstructions and the fixed-size classification}

The upper bounds supply sufficiency for equidistribution. Exact counts at target zero give the small-size obstructions needed for necessity.

\begin{proposition}[Exact small-size obstructions]\label{prop:obstructions}
For $n=1$, uniform relative equidistribution fails along every admissible sequence. For $n=2$, put $T=|\{y\in P:2y=0\}|$. Then
\begin{equation}\label{eq:pairs}
 N_P(2,0)=
 \begin{cases}
 0,&-1\notin H,\\
 (N-T)/2,&-1\in H,
 \end{cases}
\end{equation}
where
\begin{equation}\label{eq:T}
 T=
 \begin{cases}
 0,&p\ne2,\\
 N,&p=2,\ m=1,\\
 \one_{d\mid(m-1)}(q-1)/g,&p=2,\ m\geq2.
 \end{cases}
\end{equation}
If $-1\in H$ and $(p,m)\ne(2,1)$, then, for fixed $m,d$ as $q\to\infty$,
\begin{equation}\label{eq:pairratio}
 \frac{N_P(2,0)}{q^{-m}\binom N2}
 =g+O_{m,d}(q^{-1}).
\end{equation}
The estimate is uniform in $p,f,g$.
\end{proposition}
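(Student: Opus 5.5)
For $n=1$ I will use the exact count $N_P(1,b)=\one_{b\in P}$. The main term is $q^{-m}N$, which by Lemma~\ref{lem:size} is at most $1/g\le 1$ and at least $1/(2g)\ge 1/(2d)$. Take $b=0\notin P$. Then $N_P(1,0)=0$, so the ratio at $b=0$ is $0$ and $\rho_1(q)\ge1$ for every admissible $q$. Hence uniform relative equidistribution fails along every sequence. (Alternatively, $b\in P$ gives a ratio $q^m/N\ge g$, which is also bounded away from $1$ when $g>1$; the target $0$ suffices in all cases.)

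For $n=2$, $N_P(2,0)$ counts unordered pairs $\{x,y\}\subseteq P$ with $x\ne y$ and $x+y=0$, that is, $y=-x\ne x$. So I first show that $P$ is closed under negation exactly when $-1\in H$, and that otherwise $-x\notin P$ for every $x\in P$.

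\begin{itemize}
\item By the decomposition \eqref{eq:decomp}, $x=\mu_{jd}(v)$ with $v\in H_{m-jd}$, and $-x=\mu_{jd}(-v)$. Moreover $-v\in H_{m-jd}$ iff its residue $-\bar v$ lies in $H$, because $H_s$ is the full inverse image of $H$.
\item Since $\bar v\in H$ and $H$ is a group, $-\bar v\in H$ iff $-1\in H$. Valuations are preserved under negation, so layers do not mix.
\item If $-1\notin H$, no $x\in P$ has $-x\in P$, which gives the value $0$.
\item If $-1\in H$, negation is an involution of $P$ whose fixed points are the $y$ with $2y=0$. The pairs are its $2$-orbits, so $N_P(2,0)=(N-T)/2$.
\end{itemize}

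Next I compute $T$.

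\begin{itemize}
\item If $p\ne2$, then $2$ is a unit, so $2y=0$ forces $y=0\notin P$, and $T=0$.
\item If $p=2$ and $m=1$, the ring $R$ has characteristic $2$, so every $y$ satisfies $2y=0$ and $T=N$.
\item If $p=2$ and $m\ge2$, then $2y=0$ means $y\in\Ann(2)=2^{m-1}R$ by \eqref{eq:annihilator}, i.e.\ valuation $m-1$ (nonzero). The layers have valuation $jd$, so this needs $d\mid m-1$, with $j=(m-1)/d\le J$ automatically. That layer is $\mu_{m-1}(H_1)$, of size $|H|=(q-1)/g$.
\end{itemize}

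Finally, for \eqref{eq:pairratio}, assume $-1\in H$ and $(p,m)\ne(2,1)$. In every remaining case $T\le (q-1)/g=O(q^{1-m}N)$, so $T/N=O(q^{-1})$ by Lemma~\ref{lem:size}, which gives $N\ge q^m/(2g)$ and $g\le d$. Then
\[
\frac{(N-T)/2}{q^{-m}N(N-1)/2}=\frac{q^m}{N}\cdot\frac{1-T/N}{1-1/N}.
\]
Lemma~\ref{lem:size} gives $q^m/N=g(1+O(q^{-1}))$ with an absolute constant, and $1/N=O(d\,q^{-m})$. Combining these yields $g+O_{m,d}(q^{-1})$, uniformly in $p,f,g$.

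The only delicate point is the closure-under-negation step. It relies on the full-inverse-image description of $H_s$ from Proposition~\ref{prop:power-exact}. It also needs care in characteristic $2$ with $m=1$, where $-1=1\in H$ always holds but every element is its own negative, so $T=N$ and the count is $0$. This is why that case is excluded from \eqref{eq:pairratio}.
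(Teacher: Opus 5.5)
Your proof is correct and follows the paper's argument essentially step for step: the target $0\notin P$ for $n=1$; the full-inverse-image description of $H_s$, giving $P=-P$ or $P\cap(-P)=\varnothing$; $\Ann_R(2)=2^{m-1}R$ for computing $T$; and the same factorisation $\frac{q^m}{N}(1-\frac TN)\frac{N}{N-1}$ of the ratio. One cosmetic slip: for $m=1$ the bound $T\le(q-1)/g$ only gives $T/N=O(1)$, so in that case appeal directly to $T=0$ (there $p$ is odd) rather than to that bound.
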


\begin{proof}
The value zero is excluded from $P$, so $N_P(1,0)=0$ whereas $q^{-m}N>0$. Its relative error equals one.

The full residue-fibre description in Proposition~\ref{prop:power-exact} gives $P=-P$ if $-1\in H$ and $P\cap(-P)=\varnothing$ otherwise, since the cosets $H$ and $-H$ are equal or disjoint, respectively. A distinct ordered pair with sum zero is $(y,-y)$ with $y\in P$ and $2y\ne0$. Division by two proves \eqref{eq:pairs}.

For odd $p$, multiplication by two is invertible. For $p=2,m=1$, it annihilates the whole ring. For $p=2,m\geq2$, \eqref{eq:annihilator} with $t=1$ gives $\Ann_R(2)=2^{m-1}R$. Its nonzero elements have valuation $m-1$. Therefore a layer of $P$ contributes only when $jd=m-1$, and that layer has size $(q-1)/g$. This proves \eqref{eq:T}. In this last case,
\[
 T/N=O_{m,d}(q^{1-m})=O_{m,d}(q^{-1}).
\]
In the cases relevant to \eqref{eq:pairratio},
\[
 \frac{N_P(2,0)}{q^{-m}\binom N2}
 =\frac{q^m(N-T)}{N(N-1)}
 =\frac{q^m}{N}\left(1-\frac TN\right)\frac N{N-1}.
\]
Lemma~\ref{lem:size} and $g\leq d$ give \eqref{eq:pairratio}.
\end{proof}

\begin{theorem}[Fixed-positive-size classification]\label{thm:classification}
Fix $m\geq1$, $d\geq2$, and a positive integer $n$. Along any admissible sequence $q_j=p_j^{f_j}\to\infty$, put
\[
 R_j=\GR(p_j^m,p_j^{mf_j}),\qquad
 P_j=\{x^d:x\in R_j\}\setminus\{0\},\qquad
 N_j=|P_j|,\qquad g_j=\gcd(d,q_j-1).
\]
Uniform relative equidistribution
\[
 \sup_{b\in R_j}
 \left|\frac{N_{P_j}(n,b)}{q_j^{-m}\binom{N_j}{n}}-1\right|\longrightarrow0
\]
has the following classification:
\begin{enumerate}
\item If $n=1$, it always fails.
\item If $n\geq3$, it always holds, with rate \eqref{eq:universal}.
\item If $n=2$, it holds if and only if, eventually,
\[
 g_j=1,\qquad \bigl(m\geq2\ \text{or}\ p_j\ne2\bigr).
\]
In the successful $n=2$ cases the relative error is $O_{m,d}(q_j^{-1})$.
\end{enumerate}
The quotient is used only when $1\leq n\leq N_j$, which holds for all sufficiently large $j$.
\end{theorem}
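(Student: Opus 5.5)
The classification will be assembled from Theorem~\ref{thm:rates} (sufficiency) and Proposition~\ref{prop:obstructions} (necessity), handling the three size ranges separately. First, for fixed $n\geq1$, Lemma~\ref{lem:size} gives $N_j\geq q_j^m/(2d)\to\infty$, so eventually $1\leq n\leq N_j$ and the quotient is defined. For $n=1$, Proposition~\ref{prop:obstructions} shows that the relative error at target $b=0$ equals one for every $j$, so the supremum cannot tend to zero. For $n\geq3$, Theorem~\ref{thm:rates} gives $\rho_n(q_j)\leq D_{m,d,n}q_j^{1-n/2}$ once $q_j^m\geq4dn$. Since $1-n/2<0$, this tends to zero, which gives item (2) together with the rate \eqref{eq:universal}.

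For $n=2$, sufficiency comes from the $g=1$ branches of \eqref{eq:gamma}. If eventually $g_j=1$ and $m\geq2$, then $\gamma=n-1=1$. If eventually $g_j=1$, $m=1$ and $p_j\ne2$, then $\lfloor 2/p_j\rfloor=0$, so again $\gamma=1$. In either case \eqref{eq:explicit} with $g=1$ gives $\rho_2(q_j)\leq D_{m,1,2}q_j^{-1}$, which is $O_{m,d}(q_j^{-1})$. A sequence may mix the two subcases, but the bound is the same in both, so this causes no difficulty.

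Necessity at $n=2$ is the main step, because the condition is asserted to hold \emph{eventually}. I would argue by contrapositive: if the condition fails for infinitely many $j$, then infinitely many terms lie in one of two bad classes, and I show that $\rho_2$ stays bounded away from zero along each class.

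\emph{Class (a): $g_j>1$.} I would split according to whether $-1\in H$.
\begin{itemize}
\item If $-1\notin H$, then \eqref{eq:pairs} gives $N_P(2,0)=0$, so the relative error is $1$.
\item If $-1\in H$, then $p_j\neq 2$. Indeed, for $p=2$ we have $-1=1\in H$ automatically, but also $q$ is even, so $q-1$ is odd and $g$ could still exceed $1$. So the subcase $p=2$, $m=1$ must be checked separately. Setting that aside, for $(p,m)\ne(2,1)$ equation \eqref{eq:pairratio} gives ratio $g_j+O(q_j^{-1})\geq 2+o(1)$, so the relative error is at least $1-o(1)$.
\end{itemize}

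\emph{Class (b): $m=1$ and $p_j=2$.} Here $-1=1\in H$ and $T=N$, so \eqref{eq:pairs} gives $N_P(2,0)=0$ and the relative error is again $1$. This covers any $g_j$, including the leftover subcase $p=2$, $m=1$, $g>1$ from class (a). Hence every bad term has $\rho_2$ at least about $1/2$, and uniform equidistribution fails.

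The only delicate bookkeeping is making sure the case split for necessity covers every failing term: $g_j>1$ with $p_j=2$ and $m=1$ is handled by class (b), and everything else by \eqref{eq:pairs} and \eqref{eq:pairratio}. I expect no further obstacle, since the uniformity in $p,f,g$ of \eqref{eq:pairratio} and of the constants in Theorem~\ref{thm:rates} lets these pointwise estimates pass directly to arbitrary admissible sequences.
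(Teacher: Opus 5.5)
Your proposal is correct and follows the paper's proof essentially step for step. Sufficiency at $n=2$ comes from the $g=1$ branches of \eqref{eq:gamma} in Theorem~\ref{thm:rates}, and necessity comes from target zero via \eqref{eq:pairs} and \eqref{eq:pairratio}, with field characteristic two always giving an exact zero. One slip: your sentence asserting that $-1\in H$ forces $p_j\neq2$ is false, since in residue characteristic two $-1$ reduces to $1\in H$. Your subsequent split by $(p,m)\neq(2,1)$ already handles $p_j=2$, $m\geq2$, $g_j>1$ correctly through \eqref{eq:pairratio}, so simply delete that claim.
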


\begin{proof}
The first two assertions follow from Proposition~\ref{prop:obstructions} and Theorem~\ref{thm:rates}. If $n=2$ and eventually $g=1$, the exponent in \eqref{eq:gamma} is one for $m\geq2$, and also for $m=1,p\ne2$. The constants in \eqref{eq:explicit} are uniform, proving sufficiency.

If $g\geq2$ infinitely often, consider those terms of the sequence. At target zero, \eqref{eq:pairs} and \eqref{eq:pairratio} give a ratio of either zero or $g+O_{m,d}(q^{-1})$, except that field characteristic two always gives zero. Because $2\leq g\leq d$, the ratio's distance from one is bounded away from zero for sufficiently large $q$ on that subsequence. If $m=1,p=2$ infinitely often, the ratio at target zero is exactly zero. Either circumstance prevents uniform convergence and proves necessity.
\end{proof}

\begin{remark}[All extension degrees]
For a fixed admissible $p$ and $d\geq2$, equidistribution at $n=2$ cannot hold along \emph{all} extension degrees $f\to\infty$. Choose a prime $\ell\mid d$. Since $p\nmid d$, the residue of $p$ has a finite multiplicative order modulo $\ell$. For every positive multiple of that order, $\ell\mid(p^f-1)$, hence $g\geq\ell$ infinitely often. Theorem~\ref{thm:classification} applies. Selected extension-degree subsequences with $g=1$ can have the successful $n=2$ behaviour, subject to the field characteristic-two exception.
\end{remark}

In residue characteristic two, admissibility requires odd $d$. At $n=2$, the field case always fails, while length $m\geq2$ succeeds on eventual index-one sequences. The self-pair correction is \eqref{eq:T}. For even $d$, every admissible $p$ is odd and $g\geq2$ because $q-1$ is even, so pairs fail along every admissible sequence.

\appendix
\numberwithin{equation}{section}
\renewcommand{\theHequation}{appendix.\Alph{section}.\arabic{equation}}

\section{Technical comparison with the length-two literature}\label{app:length-two-comparison}

Throughout this comparison, $m=2$, $p$ is odd, $d\geq2$ and $p\nmid d$. Both papers of Ding and Zhou cover subset sums over the same distinct nonzero power-value set $P$, with $N=q(q-1)/g$.
Their 2019 exponent $k$, extension parameter $r$ and subgroup index correspond to our $d$, $f$ and $g$, respectively. The complete residue fibres have $q$ lifts and $g$ roots per value \cite[Lemma~2.9(1), pp.~155--156]{DZ}. Together with the character definitions and \cite[Lemma~2.8, p.~154]{DZ}, these give the exact sums $N$, $q\eta_H(\lambda_\psi)$ and $0$ at conductors $0$, $1$ and $2$. Integer twists lower conductors as in \eqref{eq:ci}, rescale the induced field character by the unit part when the lowered conductor is positive, and contribute the full value $N$ when trivial.

\subsection*{Consequences of the 2019 estimates}
The printed all-target bound has main term $q^{-2}\binom Nn$ \cite[Theorem~6.1, pp.~166--167]{DZ}. The proof estimates \cite[proof of Theorem~3.2, pp.~158--159]{DZ} and distinct-value conversion \cite[Section~6, pp.~166--167]{DZ} also yield, for fixed $d,n$ with $n\geq1$, relative-error bounds $O_{d,n}(q^{1-n/2})$ for $g>1$ and $O_n(q^{1-n})$ for $g=1$. These bounds use positive normalisation on $1\leq n\leq N$ and hold uniformly over unit targets along arbitrary admissible sequences, and over all targets along sequences with eventually $p\nmid n$.

\subsection*{Consequences of the 2023 exact formulas}
The sequel's constant-fibre formula counts distinct image values \cite[Theorem~5.11, p.~130]{DZII}. For $A=R^\times$ and $F(X)=X^d$ \cite[Corollary~5.12, p.~133]{DZII}, the image is $P$, each fibre has size $g$, and $|R\setminus A|=q$. Image distinctness is imposed after cyclewise fibre conversion, since distinct bases can have repeated images. Combining the exact image Fourier/sieve identities \cite[equations~(5.1)--(5.5), pp.~131--132]{DZII} with the character sums above gives our length-two coefficient majorant, with $q-1$ and $q^2-q$ characters of conductors one and two:
\[
 \left|N_P(n,b)-q^{-2}\binom Nn\right|
 \leq q^{-2}\bigl\{(q-1)C_1(n)+(q^2-q)C_2(n)\bigr\},
\]
where $C_c(n)$ is defined in \eqref{eq:C}. For fixed $d,n$ with $n\geq1$, division by the positive main term on $1\leq n\leq N$ gives all-target relative-error bounds $O_{d,n}(q^{1-n/2})$ for $g>1$ and $O_n(q^{1-n})$ for $g=1$, uniformly over admissible varying odd $p,f,g$, including $p\mid n$. In particular, index-one pairs have relative-error bound $O(q^{-1})$ and satisfy uniform relative equidistribution, already covered by the qualified 2019 derivation above. These rates follow from the exact formulas and are not stated separately in the sequel. The coefficient majorant bounds the actual error and need not have the same order.

\section{An exact formula in the index-one field case}\label{app:fieldexact}

We recall Li and Wan's exact subset-count formula \cite[Theorem~1.2]{LW08} and derive its relative error and fixed-size asymptotic expansion.

\begin{proposition}\label{prop:fieldexact}
Suppose $m=1$ and $g=1$, so $P=\F_q^\times$. For every $1\leq n\leq q-1$, put
\[
 L=\lfloor n/p\rfloor,\qquad
 a_n=(-1)^{n+L}\binom{q/p-1}{L}.
\]
Then the exact count is
\begin{equation}\label{eq:fieldexact}
 N_P(n,b)=\frac1q\left[
 \binom{q-1}{n}+(q\one_{b=0}-1)a_n\right].
\end{equation}
In particular,
\begin{equation}\label{eq:fieldrho}
 \rho_n(q)=
 \frac{(q-1)\binom{q/p-1}{L}}{\binom{q-1}{n}},
\end{equation}
attained at $b=0$. For fixed $d,n$, as $q\to\infty$ through admissible prime powers with $g=1$,
\begin{equation}\label{eq:fieldsharp}
 \rho_n(q)=\frac{n!}{p^L L!}\,
 q^{1+L-n}\bigl(1+O_n(q^{-1})\bigr).
\end{equation}
The error term is uniform in the prime characteristic. The parameter $L$, the exponent and the leading coefficient are evaluated at each $q$.
\end{proposition}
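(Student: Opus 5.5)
We would rederive \eqref{eq:fieldexact} directly from the Fourier inversion \eqref{eq:Fourier} with $m=1$, rather than only quote \cite[Theorem~1.2]{LW08}. Then \eqref{eq:fieldrho} and \eqref{eq:fieldsharp} follow from elementary binomial estimates. Here $P=\F_q^\times$ and $N=q-1$, so the trivial character contributes $q^{-1}\binom{q-1}{n}$. The task is to compute $[z^n]F_\psi(z)$ exactly for each nontrivial additive character $\psi$ of $\F_q$.

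\emph{Product identity.} A nontrivial $\psi$ is a surjective homomorphism onto the $p$th roots of unity. Hence each such root is taken exactly $q/p$ times on $\F_q$, and
\[
 \prod_{y\in\F_q}(1+z\psi(y))=\Bigl(\prod_{\zeta^p=1}(1+z\zeta)\Bigr)^{q/p}=\bigl(1-(-z)^p\bigr)^{q/p}.
\]
This holds for $p=2$ as well. Removing the factor $1+z$ belonging to $y=0$ gives
\[
 F_\psi(z)=\bigl(1-(-z)^p\bigr)^{q/p}(1+z)^{-1}.
\]
This product is independent of $\psi$. Extracting $[z^n]$ gives
\[
 \sum_{j=0}^{L}(-1)^{j}\binom{q/p}{j}(-1)^{pj}(-1)^{n-pj}=(-1)^n\sum_{j=0}^{L}(-1)^j\binom{q/p}{j},
\]
with $L=\lfloor n/p\rfloor$. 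By the standard alternating partial-sum identity $\sum_{j\le L}(-1)^j\binom{M}{j}=(-1)^L\binom{M-1}{L}$, this equals $a_n$. Finally, $\sum_{\psi\ne1}\psi(-b)=q\one_{b=0}-1$, and inserting these into \eqref{eq:Fourier} yields \eqref{eq:fieldexact}.

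\emph{Relative error.} By \eqref{eq:fieldexact}, the relative error at target $b$ is $|q\one_{b=0}-1|\,|a_n|/\binom{q-1}{n}$. This is $(q-1)|a_n|/\binom{q-1}{n}$ at $b=0$ and $|a_n|/\binom{q-1}{n}$ otherwise. The supremum is therefore attained at $b=0$, and \eqref{eq:fieldrho} follows because $|a_n|=\binom{q/p-1}{L}$.

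\emph{Asymptotics.} For \eqref{eq:fieldsharp}, write
\[
 \binom{q-1}{n}=\frac{q^n}{n!}\bigl(1+O_n(q^{-1})\bigr),\qquad q-1=q\bigl(1+O(q^{-1})\bigr).
\]
If $L=0$, which includes every $p>n$, then $\binom{q/p-1}{0}=1$ and the claim is immediate. If $L\geq1$, then $p\leq n$, so $q/p\geq q/n$. Each factor $q/p-1-i$ with $0\le i<L\le n$ equals $(q/p)(1+O_n(q^{-1}))$, which gives
\[
 \binom{q/p-1}{L}=\frac{q^L}{p^LL!}\bigl(1+O_n(q^{-1})\bigr).
\]
Combining these expansions gives \eqref{eq:fieldsharp}. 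The only bound used is $p\leq n$ when $L\geq1$, so the error is uniform in the characteristic.

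The only step needing care is the product identity for $F_\psi$, in particular getting the sign $(-z)^p$ right uniformly in $p$, including $p=2$. Everything else is bookkeeping with binomial coefficients.
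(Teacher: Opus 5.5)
Your proposal is correct and follows the paper's proof essentially step by step: the same product identity $F_\psi(z)=(1-(-z)^p)^{q/p}/(1+z)$, the Pascal-telescoped alternating sum, the orthogonality count $q\one_{b=0}-1$ inserted into \eqref{eq:Fourier}, and falling-factorial expansions for \eqref{eq:fieldsharp}. The only difference is cosmetic: you get uniformity in $p$ directly from the bound $Lp\le n$ on each factor, whereas the paper takes uniform constants over the finitely many primes $p\le n$.
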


\begin{proof}
A nontrivial additive character of $\F_q$ has the $p$th roots of unity as its image, each with $q/p$ preimages. Using $\prod_{\zeta^p=1}(1+z\zeta)=1-(-z)^p$ and removing the factor at zero gives
\[
 F_\psi(z)=\frac{(1-(-z)^p)^{q/p}}{1+z}.
\]
The quotient is exact, and its degree-$n$ coefficient is
\[
 (-1)^n\sum_{j=0}^{L}(-1)^j\binom{q/p}{j}
 =(-1)^{n+L}\binom{q/p-1}{L}.
\]
Pascal's identity telescopes the sum, and $L\leq q/p-1$ follows from $n\leq q-1$. The coefficient is common to all nontrivial $\psi$. Orthogonality gives
\[
 \sum_{\psi\ne1}\psi(-b)=q\one_{b=0}-1,
\]
so \eqref{eq:Fourier} proves \eqref{eq:fieldexact}. The largest absolute multiplier is $q-1$, attained at $b=0$, giving \eqref{eq:fieldrho}.

For fixed $n$, if $p>n$ then $L=0$ and the numerator binomial is one. If $p\leq n$, only finitely many primes occur, and for each of them the falling-factorial expansions give
\[
 \binom{q/p-1}{L}=
 \frac{(q/p)^L}{L!}\bigl(1+O_n(q^{-1})\bigr),\qquad
 \binom{q-1}{n}=\frac{q^n}{n!}\bigl(1+O_n(q^{-1})\bigr).
\]
The same denominator expansion applies when $p>n$. Choosing the constants uniformly over the finitely many primes $p\leq n$ and substituting into \eqref{eq:fieldrho} proves \eqref{eq:fieldsharp} uniformly in the characteristic.
\end{proof}

\section*{Declaration of generative AI and AI-assisted technologies in the manuscript
preparation process}

OpenAI ChatGPT and Codex were used for literature and bibliographic assistance, manuscript editing and organisation.

The author independently verified the mathematical arguments and takes full responsibility for the content of the manuscript.
\bibliographystyle{plain}
\bibliography{ref}
\end{document}